\numberwithin{equation}{section}
\newcounter{hours}\newcounter{minutes}
\newlength{\Indent}
\newlength{\Parskip}
\theoremstyle{plain}
\newtheorem{thm}{Theorem}[section]     
\newtheorem{prop}[thm]{Proposition}
\theoremstyle{remark}
\newtheorem{Remark}[thm]{Remark}
\theoremstyle{definition}
\newtheorem{Defin}[thm]{Definition}
\newenvironment{defin}{\begin{Defin}}{\qed\end{Defin}}
\DeclareMathOperator{\Div}{div}
\DeclareMathOperator{\Lapl}{\Delta}
\DeclareMathOperator{\dist}{dist}
\newcommand{\RN}{\Bbb{R}^{N}}
\newcommand{\R}{\Bbb{R}}
\newcommand{\abs}[1]{\lvert#1\rvert}
\newcommand{\di}{\,\text{\rmfamily\upshape d}}
\newcommand{\pder}[2]{\frac{\partial #1} {\partial #2}}
\newcommand{\Om}{\varOmega}
\newcommand{\eps}{\varepsilon}
\newcommand{\qefed}{=:}
\newcommand{\CC}{\mathcal{C}}
\newcommand{\ZZ}{\Bbb{Z}}
\def\X{{\mathcal X}}
\newcommand{\dfint}{\mu_{\textup{int}}}
\newcommand{\dfout}{\mu_{\textup{out}}}
\newcommand{\dfboth}{\mu}
\newcommand{\dfbothe}{\mu^\eps}
\newcommand{\gammafinale}{\widetilde\dfboth}
\newcommand{\lfint}{\lambda_{\textup{int}}}
\newcommand{\lfout}{\lambda_{\textup{out}}}
\newcommand{\lfboth}{\lambda}
\newcommand{\lfbothe}{\lambda^\eps}
\newcommand{\lfav}{\lfboth_{0}}
\newcommand{\eh}{\boldsymbol{e}_{h}}
\newcommand{\ej}{\boldsymbol{e}_{j}}
\newcommand{\reps}{r_{\eps}}
\newcommand{\rteps}{\tilde r_{\eps}}
\newcommand{\Per}{E}
\newcommand{\Omint}{\Om_{\textup{int}}^{\eps}}
\newcommand{\Omout}{\Om_{\textup{out}}^{\eps}}
\newcommand{\Memb}{\varGamma^{\eps}}
\newcommand{\Perint}{\Per_{\textup{int}}}
\newcommand{\Perout}{\Per_{\textup{out}}}
\newcommand{\Permemb}{\varGamma}
\newcommand{\intotom}{\int_{0}^{t}\!\!\int_{\Om}}
\newcommand{\intotme}{\int_{0}^{t}\!\!\int_{\Memb}}
\newcommand{\oldfeps}{E_{\eps}}
\newcommand{\XX}{\X^\eps}
\newcommand{\beltrami}{\Delta^{\!\!B}}
\newcommand{\beltramigrad}{\nabla^{B}}
\newcommand{\beltramidiv}{\Div^{B}}
\newcommand{\cellproblembasename}{\boldsymbol{\mathcal{P}}}
\newcommand{\dfmat}{A^{hom}}
\newcommand{\cpo}{\cellproblembasename_{0}}
\newcommand{\cpi}{\cellproblembasename_{1}}
\newcommand{\cpii}{\cellproblembasename_{2}}
\newcommand{\spaziosoleps}{L^2\big(0,T;\XX_0(\Om)\big)}
\newcommand{\const}{\gamma}
\begin{document}

\title
{Error estimate for a homogenization problem involving the Laplace-Beltrami operator}
\author{M. Amar$^*$ -- R. Gianni$^\ddag$\\
\hfill \\
$^*$Dipartimento di Scienze di Base e Applicate per l'Ingegneria\\
Sapienza - Universit\`a di Roma\\
Via A. Scarpa 16, 00161 Roma, Italy\\ \\
$^\ddag$Dipartimento di Matematica ed Informatica\\
   Universit\`{a} di Firenze\\
   Via Santa Marta 3, 50139 Firenze, ITALY
}

\begin{abstract}
In this paper we prove an error estimate for a model of heat conduction in composite materials having
a microscopic structure arranged in a periodic array and thermally active membranes separating
the heat conductive phases.
\medskip

  \textsc{Keywords:} Homogenization, Asymptotic expansion, Laplace-Beltrami operator, Heat conduction.

  \textsc{AMS-MSC:} 35B27, 35Q79
  \bigskip

  \textbf{Acknowledgments}: We would like to thank R. Lipton and P. Bisegna for some helpful discussions. The first author is member of the \emph{Gruppo Nazionale per l'Analisi Matematica, la Probabilit\`{a} e le loro Applicazioni} (GNAMPA) of the \emph{Istituto Nazionale di Alta Matematica} (INdAM).

\end{abstract}

\maketitle

\ifx\Versione\UnDeFiNeD\else
\begin{center}
\Versione
\end{center}
\fi


\section{Introduction}\label{s:introduction}
Heat and electrical conduction in composite materials has been widely investigated
in the last years in the context of homogenization theory (see among others, e.g.
\cite{Amar:Andreucci:Bellaveglia:2015A,Amar:Andreucci:Bellaveglia:2015B,
Amar:Andreucci:Bisegna:Gianni:2003c,Amar:Andreucci:Bisegna:Gianni:2004a,
Amar:Andreucci:Bisegna:Gianni:2006a,Amar:Andreucci:Bisegna:Gianni:2010,Auriault:Ene:1994,
Timofte:2016,Donato:Monsurro:2004,Hummel:2000,
Jose:2009,Timofte:2013}).
In this paper we will focus on the study of models of heat conduction in composite materials used for
encapsulation of electronic devices. This topic is attracting increasing interest among researchers,
both from the point of view of applications and also in a more mathematical setting. In our
previous paper \cite{Amar:Gianni:2016A} (to which we refer for a more detailed physical description
of the problem) a composite medium was taken into account, which was made of a hosting material
with inclusions separated from their surroundings by a thermally active membrane.

Such a situation is consistent with many physical applications in which a material must be modified in a way such that its thermal conductivity is enhanced while preserving other material properties e.g. ductility. This is, as stated above, the case of polymer encapsulation of electronic devices as well as, just to make an example, engine coolants. Specifically, in the first case, ductility of the material is required to fill the voids and the interstices among the electrical components by applying a moderate pressure. Polymers and rubbers have this property but they do not display a satisfactory heat dissipation which, on the other hand, can be attained by adding highly conductive nanoparticles. In some situations, these nanoparticles are enclosed in a membrane separating them from the surrounding medium. It is therefore only natural to investigate the influence of these membranes on the overall conductivity of the composite medium under different assumptions on the thermal behaviour of these interfaces. The case of perfect or imperfect thermal contact, though interesting from the point of view of applications, is mathematically well known, for this reason we focused on the case in which the membrane is thermally active, e.g. a tangential heat diffusion takes place. In \cite{Amar:Gianni:2016A} a macroscopic model was deduced, via the unfolding homogenization technique, assuming
the periodicity of the microscopic structure, whose characteristic length is described
by a small parameter $\eps$. We make use of a sensible mathematical description of the behavior
of the interfaces which are modeled by means of the Laplace-Beltrami operator
(see, e.g. \cite{Allaire:Damlamian:Hornung:1995,Andreucci:Bisegna:Dibenedetto:2003}).

In this paper we complete the research started in \cite{Amar:Gianni:2016A} providing an ``error estimate"
which enables us to evaluate
the rate of convergence, with respect to $\eps\to0$, of the solution $u_{\eps}$ of the microscopic
(physical) problem to the solution $u_{0}$ of the macroscopic one. More precisely, we prove
\begin{equation*}
\begin{aligned}
&  \Vert u_{\eps}-(u_{0}+\eps u_1)\Vert_{L^2(0,T;H^1(\Om))} &\le \const \sqrt\eps\,,
\\
&  \Vert u_{\eps}-u_{0}\Vert_{L^2(\Om_T)} &\le \const \sqrt\eps\,,
 \end{aligned}
\end{equation*}
for a proper constant $\const>0$ independent of $\eps$, where $u_1$ is the so called first corrector and it is
defined in \eqref{eq:u0factor}.

To obtain this estimate we follow the classical approach given by the asymptotic
expansions due to Bensoussan-Lions-Papanicolaou \cite{Bensoussan:Lions:Papanicolaou:1978}
which, under extra-regularity assumptions, gives an $H^1$-estimate for this error.
The knowledge of the rate of convergence is a crucial tool for numerical applications.
Moreover, we prove the symmetry and the strict positivity of the matrix describing the diffusivity
of the macroscopic (homogenized) material. This last result is crucial to guarantee the
well-posedness of the parabolic limit equation.

Though the results proved in this paper are along the same lines of other ones obtained in the framework of
the homogenization theory, nevertheless they are of some mathematical interest due to the presence
of the Laplace-Beltrami operator, which makes the computations a bit tricky.
\medskip

The paper is organized as follows. In Section \ref{s:threeD_problem} we recall the definition and
some properties of the tangential operators (gradient, divergence, Laplace-Beltrami operator),
we state our geometrical setting and present our model.
In Section \ref{s:homog}, after having proved some energy inequalities, we follow the formal
approach by Bensoussan-Lions-Papanicolau in order to introduce the cell functions and to guess the limit equation,
proving the ellipticity of its principal part (see Theorem \ref{l:l1}).
Finally, in Section \ref{s:error} taking advantage of the asymptotic expansions obtained in Section \ref{s:homog}, we
provide the error estimate (see Theorem \ref{t:t1}).

\section{Preliminaries}\label{s:threeD_problem}

\subsection{Tangential derivatives}
\label{s:LB}
Let $\phi$ be a ${\mathcal C}^2$-function,  $\mathbf{\Phi}$ be a
${\mathcal C}^2$-vector function and $S$ a smooth surface
with normal unit vector $n$.
We recall that the tangential gradient of $\phi$ is given by
\begin{equation}\label{eq:a5bis}
\beltramigrad\phi=\nabla\phi-(n\cdot\nabla\phi)n
\end{equation}
and the tangential divergence of $\mathbf{\Phi}$ is given by
\begin{multline}\label{eq:a3bis}
\beltramidiv\mathbf{\Phi}
=\Div\mathbf{\Phi}- (n\cdot\nabla\mathbf{\Phi}_i)n_i-(\Div n)(n\cdot\mathbf{\Phi})
\\
=\beltramidiv \left(\mathbf{\Phi}- (n\cdot\mathbf{\Phi})n\right)
=\Div \left(\mathbf{\Phi}- (n\cdot\mathbf{\Phi})n\right)\,,
\end{multline}
where, taking into account the smoothness of $S$,
the normal vector $n$ can be naturally defined in a small neighborhood of $S$ as
$\frac{\nabla d}{|\nabla d|}$, where $d$ is the signed distance from $S$.
Moreover, we define the Laplace-Beltrami operator as
\begin{equation}
\label{eq:beltrami1}
\beltrami\phi =\Div^B(\beltramigrad\phi)\,,
\end{equation}
so that, by \eqref{eq:a5bis} and \eqref{eq:a3bis}, we get that the Laplace-Beltrami operator can be written as
\begin{multline}
\label{eq:beltrami}
\beltrami\phi =
\Delta\phi-n^t\nabla^2\phi n - (n\cdot\nabla\phi)\Div n\\
= (\delta_{ij}-n_in_j)\partial^2_{ij}\phi -  n_j\partial_j\phi \partial_in_i
= (Id-n\otimes n)_{ij}\partial^2_{ij}\phi-(n\cdot\nabla\phi)\Div n\,,
\end{multline}
where $\nabla^2\phi$ stands for the Hessian matrix of $\phi$.
Finally, we recall that on a regular surface $S$ with no boundary (i.e. when $\partial S=\emptyset$)
we have
\begin{equation}\label{eq:a66}
\int_S \beltramidiv \mathbf{\Phi}\di\sigma =0 \,.
\end{equation}

\subsection{Geometrical setting}\label{ss:geometric}
The typical periodic geometrical setting is displayed in Figure~\ref{fig:omega}.
Here we give, for the sake of clarity, its detailed formal definition.


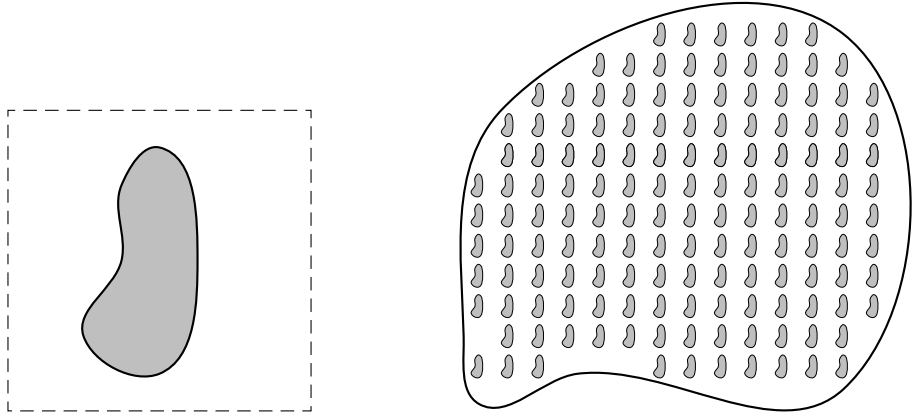
\begin{figure}[htbp]%
\begin{center}%
\begin{pspicture}(12,6)
\rput(0,1){
\psframe[linewidth=0.4pt,linestyle=dashed](0,0)(4,4)
\psccurve[fillstyle=solid,fillcolor=lightgray](2,.5)(2.5,2)(2,3.5)(1.5,3)(1.5,2)(1,1)
}
\newcommand{\minicell}{\scalebox{0.1}{
\psccurve[fillstyle=solid,fillcolor=lightgray](2,.5)(2.5,2)(2,3.5)(1.5,3)(1.5,2)(1,1)
}}
\rput(6,1){
\multirput(0,0.4)(0.4,0){3}{\minicell}%
\multirput(2.4,0.4)(0.4,0){7}{\minicell}%
\multirput(0.4,0.8)(0.4,0){12}{\minicell}%
\multirput(0,1.2)(0,0.4){5}{\multirput(0,0)(0.4,0){14}{\minicell}}%
\multirput(0.4,3.2)(0.4,0){13}{\minicell}%
\multirput(0.4,3.2)(0.4,0){13}{\minicell}%
\multirput(0.4,3.6)(0.4,0){13}{\minicell}%
\multirput(0.8,4)(0.4,0){12}{\minicell}%
\multirput(1.6,4.4)(0.4,0){9}{\minicell}%
\multirput(2.4,4.8)(0.4,0){6}{\minicell}%
\psccurve(0.2,.1)(1.5,.5)(5,.3)(5,5)(.5,4)(0,1)
}

\end{pspicture}%
    \caption{\textsl{Left}: the periodic cell $Y$. $\Perint$ is the shaded
    region and $\Perout$ is the white region.
    \textsl{Right}: the region $\Om$.}
    \label{fig:omega}
  \end{center}
\end{figure}

Let us introduce a periodic open subset $\Per$
of $\RN$, so that $\Per+z=\Per$ for all $z\in\ZZ^{N}$.
We employ the notation $Y=(0,1)^{N}$, and
$\Perint=\Per\cap Y$, $\Perout=Y\setminus\overline{\Per}$,
$\Permemb=\partial\Per\cap \overline Y$. As a simplifying assumption,
we stipulate that $|\Permemb\cap\partial Y|_{N-1}=0$.

Let $\Om$ be an open connected bounded subset of $\RN$; for all
$\eps>0$ define $\Omint=\Om\cap\eps \Per$,
$\Omout=\Om\setminus\overline{\eps \Per}$, so that
$\Om=\Omint\cup\Omout\cup\Memb$, where $\Omint$ and $\Omout$ are
two disjoint open subsets of $\Om$, and
$\Memb=\partial\Omint\cap\Om=\partial\Omout\cap\Om$.
The region $\Omout$ [respectively, $\Omint$] corresponds to the outer
phase [respectively, the inclusions], while
$\Memb$ is the interface.
We assume also that $\Om$ and $\Per$ have regular boundary and we
stipulate that $\dist(\Memb,\partial\Om)\geq \gamma_0\eps$,
for a suitable $\gamma_0>0$.
To this purpose, for each $\eps$,
we are ready to remove the inclusions in
all the cells which are not
completely contained in $\Om$ (see Figure \ref{fig:omega}).
This assumption is in accordance with our previous papers (see \cite{Amar:Andreucci:Bisegna:Gianni:2004a, Amar:Andreucci:Bisegna:Gianni:2006a, Amar:Andreucci:Bisegna:Gianni:2009, Amar:Andreucci:Bisegna:Gianni:2009a, Amar:Andreucci:Bisegna:Gianni:2010, Amar:Andreucci:Bisegna:Gianni:2013}) and maybe it can be dropped as in \cite{Allaire:Murat:1993, Cioranescu:Damlamian:Donato:Griso:Zaki:2012};
nevertheless we will not pursue this line of investigation in this paper.

Moreover, let $\nu$ denote the normal unit vector to $\Permemb$ pointing
into $\Perout$, extended by periodicity to the whole of $\R^N$, so that $\nu_\eps(x)=\nu(x/\eps)$
denotes the normal unit vector to $\Memb$ pointing into $\Omout$.

Finally, given $T > 0$, we denote by $\Om_T = \Om \times (0, T)$. More in general, for any spatial domain $G$, we denote by $G_T = G\times(0, T)$.
\medskip

\subsection{Position of the problem}\label{ss:position}
Let $\dfbothe, \lfbothe:\Om\to\R$ be defined as
\begin{equation*}
\begin{aligned}
&  \lfbothe=\lfint\,\quad \text{in $\Omint$,}\qquad \lfbothe=\lfout\,\quad
  \text{in $\Omout$;}
 \\
 & \dfbothe=\dfint\,\quad \text{in $\Omint$,}\qquad \dfbothe=\dfout\,\quad
  \text{in $\Omout$.}
\end{aligned}
\end{equation*}
For every $\eps>0$, we consider the problem
for $u_{\eps}(x,t)$ given by
\begin{alignat}2
  \label{eq:PDEin}
  \dfbothe \pder{u_\eps}{t}-\Div(\lfbothe \nabla u_{\eps})&=0\,,&\qquad &\text{in $\Om_T$;}
  \\
  \label{eq:FluxCont}
  [u_{\eps}] &=0 \,,&\qquad &\text{on
  $\Memb_T$;}
\end{alignat}
 \begin{alignat}2
  \label{eq:Circuit}
  \eps{\alpha} \pder{u_{\eps}}{t}-\eps\beta\beltrami u_\eps
  &=[\lfbothe \nabla u_\eps  \cdot \nu_\eps]\,,&\qquad
  &\text{on $\Memb_T$;}
   \\
 \label{eq:BoundData}
  u_{\eps}(x,t)&=0\,,&\qquad&\text{on $\partial\Om\times(0,T)$;}
  \\
  \label{eq:InitData}
  u_{\eps}(x,0)&=\overline u_0(x)\,,&\qquad&\text{in $\Om$,}
\end{alignat}
where we denote
\begin{equation}
  \label{eq:jump}
  [u_{\eps}] = u_{\eps}^{\text{out}}  -  u_{\eps}^{\text{int}} \,,
\end{equation}
and the same notation is employed also for other quantities.
We assume that all the constants $\dfint, \dfout, \lfint, \lfout, \alpha, \beta$, involved in equations
\eqref{eq:PDEin} and \eqref{eq:Circuit} are strictly positive.

Since problem \eqref{eq:PDEin}--\eqref{eq:InitData} is not standard, in order to define a proper notion of weak
solution, we will need to introduce some suitable function spaces.
To this purpose and
for later use, we will denote by $H^1_B(\Memb)$ the space of Lebesgue measurable functions
$u:\Memb\to\R$ such that $u\in L^2(\Memb)$, $\beltramigrad u \in L^2(\Memb)$.
Let us also set
\begin{equation}
 \label{eq:space2}
 \XX_0(\Om) :=H^1_0(\Om)\cap H^1_B(\Memb)\,.
\end{equation}


\begin{defin}\label{d:weak_sol}
We say that $u_\eps\in \spaziosoleps$ is a weak solution of problem \eqref{eq:PDEin}--\eqref{eq:InitData} if
\begin{multline}\label{eq:weak_sol}
-\int_{0}^{T}\!\!\int_{\Om} \dfbothe u_{\eps}\pder{\phi}{\tau}\di x\di \tau+
  \int_{0}^{T}\!\!\int_{\Om} \lfbothe \nabla u_{\eps}\cdot\nabla\phi \di x\di \tau
  -{\eps\alpha}\int_0^T\!\!\int_{\Memb} u_{\eps }\pder{\phi}{\tau}\di\sigma\di\tau
  \\
  +{\eps\beta}\int_0^T\!\!\int_{\Memb} \beltramigrad u_{\eps}\cdot \beltramigrad \phi\di\sigma\di\tau
  =\int_{\Om} \dfbothe \overline u_0\phi(x,0)\di x
  +{\eps\alpha}\int_{\Memb} \overline u_0\phi(x,0)\di\sigma \,,
\end{multline}
for every test function $\phi\in \CC^\infty(\Om_T)$ such that $\phi$ has compact support in $\Om$ for every
$t\in(0,T)$ and $\phi(\cdot,T)=0$ in $\Om$.
\end{defin}

If $u_\eps$ is smooth, by \eqref{eq:beltrami} it follows that equation \eqref{eq:Circuit} can be written in the form
\begin{equation}
\label{eq:Circuitnew}
\eps{\alpha} \pder{u_{\eps}}{t}-\eps\beta\left( \Delta u_\eps- \nu_\eps^t\nabla^2 u_\eps\nu_\eps -
(\nu_\eps\cdot\nabla u_\eps)\Div \nu_\eps\right)
 =[\lfboth \nabla u_\eps  \cdot \nu_\eps]\,,\qquad  \text{on $\Memb$,}
\end{equation}
where, as in \eqref{eq:beltrami}, $\nabla^2u_\eps$ stands for the Hessian matrix of $u_\eps$.
By \cite{Amar:Gianni:2016C}, for every $\eps>0$, problem \eqref{eq:PDEin}--\eqref{eq:InitData} admits a unique solution
$u_\eps\in L^2\big(0,T;\XX_0(\Om)\big)\cap \CC^0\big([0,T];L^2(\Om)\cap L^2(\Memb)\big)$,
if $\overline u_0\in H^1_0(\Om)$.
\medskip

Finally, it will be useful in the sequel to define also
$\dfboth, \lfboth:Y\to\R$ as
\begin{equation*}
  \lfboth=\lfint\,\quad \text{in $\Perint$,}\qquad \lfboth=\lfout\,\quad
  \text{in $\Perout$;}
\end{equation*}
\begin{equation*}
  \dfboth=\dfint\,\quad \text{in $\Perint$,}\qquad \dfboth=\dfout\,\quad
  \text{in $\Perout$.}
\end{equation*}

\section{Homogenization of the microscopic problem}
\label{s:homog}
In the following, we will assume that the initial data satisfies
\begin{equation}\label{eq:a33}
\overline u_0\in H^1_0(\Om)\cap H^2(\Om)\,.
\end{equation}

By the trace inequality (see \cite[Proposition 1]{Amar:Gianni:2016A} and
\cite[proof of Lemma 7.1]{Amar:Andreucci:Bisegna:Gianni:2004a})
we get that $\overline u_0$ satisfies

\begin{equation}\label{eq:a79}
\eps\int_{\Memb} |\overline u_0|^2\di\sigma\le \const\,,
\qquad
\eps\int_{\Memb} |\beltramigrad \overline u_0|^2\di\sigma\le \const\,,
\end{equation}
where $\const>0$ is independent of $\eps$.
Notice that, for our purposes, it should be enough to assume that
$\overline u_0\in H^1_0(\Om)$ and satisfies \eqref{eq:a79},
but we prefer to assume \eqref{eq:a33} since it is reasonable to choose
$\overline u_0$ not depending on $\eps$.

We are interested in understanding the limiting behaviour  of the heat
potential $u_\eps$ when $\eps\to 0$; this leads us to
look at the homogenization limit of problem \eqref{eq:PDEin}--\eqref{eq:InitData}.

To this purpose, we first obtain some energy estimates for the heat potential $u_\eps$.
Multiplying \eqref{eq:PDEin} by $u_{\eps}$ and integrating, formally, by parts, we
obtain
\begin{multline}
  \label{eq:energy0}
\frac{1}{2}\int_{0}^{t}\!\!\int_{\Om} \dfbothe \pder{u^2_{\eps}}{\tau}\di x\di \tau+
  \int_{0}^{t}\!\!\int_{\Om} \lfbothe \abs{\nabla u_{\eps}}^{2} \di x\di \tau +
  \\
  \frac{\eps\alpha}{2}\int_0^t\!\!\int_{\Memb} \pder{u^2_{\eps }}{\tau}\di\sigma\di\tau +
  {\eps\beta}\int_0^t\!\!\int_{\Memb} \abs{\beltramigrad u_{\eps}}^{2}(x)\di\sigma\di\tau=0\,.
\end{multline}
Then, evaluating the time integral and taking into account the initial condition \eqref{eq:InitData},
we obtain, for all $0<t<T$,
\begin{multline}
  \label{eq:energy00}
\frac{1}{2}\int_{\Om} \dfbothe u^2_{\eps}(t)\di x+
  \int_{0}^{t}\!\!\int_{\Om} \lfbothe \abs{\nabla u_{\eps}}^{2} \di x\di \tau +
  \frac{\eps\alpha}{2}\int_{\Memb} u^2_{\eps }(t)\di\sigma +
  {\eps\beta}\int_0^t\int_{\Memb} \abs{\beltramigrad u_{\eps}}^{2}\di\sigma\di\tau=
  \\
  \frac{1}{2}\int_{\Om} \dfbothe \overline u_0^2\di x+
  \frac{\eps\alpha}{2}\int_{\Memb} \overline u^2_0\di\sigma\,.
\end{multline}
By \eqref{eq:a79} the right hand side of \eqref{eq:energy00} is stable
as $\eps\to0$, hence
\begin{multline}
  \label{eq:energy}
\sup_{t\in (0,T)}\int_{\Om} u^2_{\eps}(t)\di x+
  \int_{0}^{T}\!\!\int_{\Om} \abs{\nabla u_{\eps}}^{2} \di x\di \tau
  \\
  +\sup_{t\in (0,T)}  \eps\int_{\Memb} u^2_{\eps }(t)\di\sigma +
  \eps\int_0^T\int_{\Memb} \abs{\beltramigrad u_{\eps}}^{2}\di\sigma\di\tau\le \const\,,
\end{multline}
where $\const$ is a constant independent of $\eps$.

Notice that inequality \eqref{eq:energy} 
implies that there exists a function $u$ belonging to $ L^2\big(0,T;H^1_0(\Om)\big)$
such that, up to a subsequence, $u_\eps\rightharpoonup u$, weakly in $L^2\big(0,T;H^1_0(\Om)\big)$.
It will be our purpose to characterize the limit function $u$.

\subsection{The two-scale expansion}\label{ss:two_scale}
We summarize here, to establish the notation, some well-known
asymptotic expansions needed in the two-scale method (see, e.g.,
\cite{Bensoussan:Lions:Papanicolaou:1978}, \cite{Sanchez:1980}), when
applied to stationary or evolutive problems involving second order
partial differential equations.
Introduce the microscopic variables $y\in Y$, $y=x/\eps$ and assume
\begin{equation}
  \label{eq:uexp}
  u_{\eps}=u_{\eps}(x,y,t)=u_{0}(x,y,t) + \eps u_{1}(x,y,t) + \eps^{2}
  u_{2}(x,y,t) + \dots\,.
\end{equation}
Note that $u_{0}$, $u_{1}$, $u_{2}$ are periodic in $y$, and
$u_{1}$, $u_{2}$ are assumed to have zero integral average over
$Y$. Recalling that
\begin{equation}
  \label{eq:dermicmac}
  \Div=\frac{1}{\eps} \Div_{y} +   \Div_{x}\,, \qquad
  \nabla=\frac{1}{\eps} \nabla_{y} + \nabla_{x}\,,
\end{equation}
we compute
\begin{equation}
  \label{eq:gradexp}
  \nabla u_{\eps} =\frac{1}{\eps}\nabla_{y} u_{0} + \big(\nabla_{x}
  u_{0} +\nabla_{y} u_{1}\big) + \eps \big(\nabla_{y} u_{2} + \nabla_{x}
  u_{1}\big) +\dots\,,
\end{equation}
and
\begin{equation}
  \label{eq:laplexp}
  \Lapl u_{\eps}=\frac{1}{\eps^{2}} A_{0} u_{0} + \frac{1}{\eps}
  (A_{0}u_{1}+A_{1}u_{0}) + (A_{0}u_{2}+A_{1}u_{1}+A_{2}u_{0}) +
  \dots\,,
\end{equation}
where
\begin{equation}
  \label{eq:Axyoperators}
  A_{0}=\Lapl_{y}\,,\quad A_{1}=\Div_{y}\nabla_{x}
  +\Div_{x}\nabla_{y}\,,\quad A_{2}=\Lapl_{x}\,.
\end{equation}
Moreover, recalling \eqref{eq:beltrami1} and taking into account that the normal vector $\nu_\eps$
depends only on the microscopic variable, we obtain also
\begin{equation}
  \label{eq:dataexp}
  \beltrami u_{\eps}=\frac{1}{\eps^{2}} A^B_{0} u_{0} + \frac{1}{\eps}
  (A^B_{0}u_{1}+A^B_{1}u_{0}) + (A^B_{0}u_{2}+A^B_{1}u_{1}+A^B_{2}u_{0}) +
  \dots\,,
\end{equation}
where
\begin{multline}
  \label{eq:beltramixyoperators}
  A^B_{0}=\beltrami_{y} \,,\quad \quad A^B_{2}=\beltrami_{x}
\\
A^B_{1}=\Div^B_x\nabla^B_y+\Div^B_y\nabla^B_x
=2(Id-\nu\otimes \nu)_{ij}\partial^2_{x_iy_j}-(\Div_y \nu)\nu\cdot\nabla_x  \,.
\end{multline}

Substituting in \eqref{eq:PDEin}--\eqref{eq:InitData} the expansion \eqref{eq:uexp}, and
using \eqref{eq:dermicmac}--\eqref{eq:beltramixyoperators}, one
readily obtains, by matching corresponding powers of $\eps$, that
$u_{0}$ solves $[u_{0}]=0$ on $\Permemb$, and
\begin{equation*}
  \cpo[u_{0}]\,: \qquad
  \left\{
  \begin{aligned}
    &-\lfboth \Lapl_{y} u_{0}=0\,,\quad\text{in $\Perint$, $\Perout$;}
    \\
    & \beta\beltrami_y u_{0}
    +[\lfboth \nabla_{y} u_{0}\cdot \nu ]=0\,, \quad \text{on $\Permemb$.}
  \end{aligned}
  \right.
\end{equation*}
By the equality
\begin{align*}
0&=\int_Y \lfboth|\nabla_y u_{0}|^2\, dy+\int_{\Permemb} [\lfboth\nabla_y u_{0}\cdot\nu]u_{0}\,d\sigma
=\int_Y \lfboth|\nabla_y u_{0}|^2\,dy-\int_{\Permemb} \beta\beltrami_y u_{0} u_{0}\,d\sigma
\\
&=\int_Y \lfboth|\nabla_y u_{0}|^2\,dy+\int_{\Permemb} \beta |\nabla^B_y u_{0}|^2 \,d\sigma\,,
\end{align*}
we obtain that $u_{0}$ is independent of $y$, i.e., $u_{0}=u_{0}(x,t)$.

Moreover, $u_{1}$ satisfies
$[u_{1}]=0$ on $\Permemb$, and
\begin{equation*}
  \cpi[u_{1}]\,: \qquad
  \left\{
  \begin{aligned}
    &-\lfboth \Lapl_{y} u_{1}=0\,,\quad\text{in $\Perint$, $\Perout$;}
    \\
    & \beta\beltrami_y u_{1}+[\lfboth \nabla_{y} u_{1}\cdot \nu ]= -\beta(\Div^B_y\nabla^B_x u_{0})
    - [\lfboth \nabla_{x}
    u_{0}\cdot \nu ]\,,  \quad \text{on $\Permemb$.}
  \end{aligned}
  \right.
\end{equation*}

Following a classical approach, we introduce the factorization
\begin{equation}
  \label{eq:u0factor}
  u_{1}(x,y,t)=-\chi (y) \cdot \nabla_x u_{0}(x,t)
  =-\chi_h (y) \frac{\partial u_{0}}{\partial x_h} (x,t)\,,\qquad h=1,\dots,N\,,
\end{equation}
for a vector function $\chi:Y\to\RN$, whose components $\chi_{h}$ satisfy
\begin{alignat}2
  \label{eq:pdechi_D}
  -\lfboth \Div_{y} (\nabla_y\chi_{h}-\eh)&=0\,,&\qquad&\text{in $\Perint$,
  $\Perout$;}\\
  \label{eq:fluxcontchi_D}
   \beta\beltrami_y(\chi_{h}-y_h)&= -[\lfboth (\nabla_{y} \chi_{h} -\eh)\cdot \nu]
  \,,&\qquad &\text{on $\Permemb$;}\\
  \label{eq:circuitchi_D}
  [\chi_{h}] &=0\,,&\qquad    &\text{on $\Permemb$.}
\end{alignat}
The functions $\chi_{h}$ are also required to be periodic in
$Y$, with zero integral average on $Y$ (here, $\eh$ denotes the $h$ vector of the canonical basis of $\R^N$).
We note that \cite{Amar:Gianni:2016C} assures existence and uniqueness
of the cell functions $\chi_h\in \CC^\infty_\#(Y)$, for $h=1,\dots,N$
(here and in the following, the subscript $\#$ denotes the $Y$-periodicity).

Finally, $u_{2}$ solves $[u_{2}]=0$ on $\Permemb$, and
\begin{equation*}
  \cpii[u_{2}]\,: \qquad
  \left\{
  \begin{aligned}
    &-\lfboth \Lapl_{y} u_{2}=-\dfboth u_{0t}+\lfboth \Lapl_{x} u_{0} + 2\lfboth
    \frac{\partial^{2} u_{1}}{\partial x_{j}\partial y_{j}}\,,
    \quad\text{in $\Perint$, $\Perout$;}
    \\
    & \beta \beltrami_y u_{2}+[\lfboth \nabla_{y} u_{2}\cdot \nu ]=
    \\
    &\quad \alpha u_{0t}-\beta\beltrami_x u_{0}
    -\beta\Div^B_x\nabla^B_yu_{1}-\beta\Div^B_y\nabla^B_xu_{1}- [\lfboth \nabla_{x}
    u_{1}\cdot \nu ]\,, \quad \text{on $\Permemb$.}
  \end{aligned}
  \right.
\end{equation*}

The limiting equation for $u_{0}$ is finally obtained as a compatibility
condition for $\cpii[u_{2}]$, and amounts to
\begin{equation}
\label{eq:compatibile}
\begin{aligned}
& \int_Y \Big(-\dfboth u_{0t}+ \lfboth\Delta_xu_0 +2\lfboth\frac{\partial^2u_1}{\partial x_j\partial y_j}\Big)\di y
=\int_\Permemb[\lfboth\nabla_yu_2\cdot\nu]\di \sigma=
\\
& \int_\Permemb \Big(\alpha u_{0t}-[\lfboth\nabla_xu_1\cdot\nu]-\beta\beltrami_yu_2
 -\beta\beltrami_xu_0-\beta\Div^B_x\beltramigrad_y u_1
- \beta\Div^B_y\beltramigrad_x u_1\Big)\di \sigma\,.
\end{aligned}
\end{equation}
We replace now the factorization \eqref{eq:u0factor} in the previous equality and we take
into account that
\begin{alignat}2
\label{eq:a7} & 2\int_Y \lfboth\frac{\partial^2u_1}{\partial x_j\partial y_j}\di y= -2\int_\Permemb [\lfboth\nabla_xu_1\cdot\nu]\di\sigma\,,
\\
\label{eq:a8} & -\int_\Permemb[\lfboth\nabla_xu_1\cdot\nu]\di\sigma =
\Div\left(\Big(\int_\Permemb [\lfboth](\nu\otimes\chi)\di\sigma  \Big)\nabla u_0\right)\,,
\\
\label{eq:a9} & -\int_\Permemb \beta\beltrami_y u_2\di\sigma =0\,,
\\
\label{eq:a10} & -\int_\Permemb \beta\beltrami_x u_0\di\sigma=-\beta|\Permemb|\Delta u_0 +
\Div\left(\Big(\int_\Permemb \beta(\nu\otimes\nu)\di\sigma\Big)\nabla u_0\right)\,,
\end{alignat}
 \begin{alignat}2
\label{eq:a11} &
-\int_\Permemb \beta\Div^B_x\beltramigrad_y u_1\di\sigma=
\Div\left(\Big(\int_\Permemb \beta(I-\nu\otimes\nu)\nabla_y\chi\di\sigma\Big)\nabla u_0\right)\,,
\\
\label{eq:a12} &
{\begin{aligned}
& -\int_\Permemb \beta\Div^B_y\beltramigrad_x u_1\di\sigma=0\,,
\end{aligned}}
\end{alignat}
where \eqref{eq:a12} follows from \eqref{eq:a66}, since $\Permemb$ has no boundary.
Hence, we obtain for the homogenized solution $u_0$ the parabolic equation
\begin{equation}
  \label{eq:limitPDE_kpinf}
\gammafinale u_{0t}  -\Div \Big( (\lfav I  + \dfmat) \nabla u_{0} \Big)
  =0\,, \qquad \text{in $\Om_T$,}
\end{equation}
where
\begin{multline}
  \label{eq:dfmatD_dfav}
  \gammafinale=\dfint \abs{\Perint}+\dfout\abs{\Perout}+ \alpha |{\Permemb}|\,,
  \qquad
  \lfav=\lfint\abs{\Perint}+\lfout\abs{\Perout}\,,
  \\
  \dfmat
   =\int_{\Permemb}[\lfboth] (\nu\otimes\chi)\di  \sigma
  +\beta\int_{\Permemb}\Big((I-\nu\otimes \nu)+(\nu\otimes\nu)\nabla_y\chi-\nabla_y\chi\Big)
  \di  \sigma=
\\
=\int_{\Permemb}[\lfboth] (\nu\otimes\chi)\di  \sigma
  -\beta\int_{\Permemb}\beltramigrad_y(\chi-y)  \di  \sigma
  \,.
\end{multline}
Clearly, equation \eqref{eq:limitPDE_kpinf} must be complemented with a boundary
and an initial condition which are $u_0=0$ on $\partial \Om\times (0,T) $
and $u_0(x,0)=\overline u_0(x)$ in
$\Om$, respectively, as follows from the microscopic problem \eqref{eq:PDEin}--\eqref{eq:InitData}.
Indeed, by \eqref{eq:energy} we obtain that $\{u_\eps\}$ converges weakly in $L^2\big(0,T;H^1_0(\Om)\big)$,
which implies the weak convergence of the trace on $\partial\Om$, while the initial data is already
included in the weak formulation of the problem.
\medskip

\begin{thm}\label{l:l1}
The matrix $\lfav I  + \dfmat$ is symmetric and positive definite.
\end{thm}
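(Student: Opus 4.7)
The strategy is to identify the quadratic form $\xi^{T}(\lfav I+\dfmat)\xi$ with the manifestly non-negative energy
\begin{equation*}
 E(\xi):=\int_{Y}\lfboth\,|\xi-\nabla_{y}\chi_{\xi}|^{2}\,\di y
 +\int_{\Permemb}\beta\,|\beltramigrad_{y}(\xi\cdot y-\chi_{\xi})|^{2}\,\di\sigma,
\end{equation*}
where $\chi_{\xi}:=\sum_{h}\xi_{h}\chi_{h}$. Since the polarised form of this identity is automatically symmetric in $(\xi,\eta)$, both assertions will follow at once.

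\textbf{Step 1 (weak cell formulation).} I test the cell problem \eqref{eq:pdechi_D}--\eqref{eq:circuitchi_D} against an arbitrary $Y$-periodic function $\phi$ with $[\phi]=0$ on $\Permemb$. Integrating by parts in $\Perint$ and $\Perout$ separately, using periodicity of $\phi$ to cancel the contributions on $\partial Y$, the jump condition \eqref{eq:fluxcontchi_D} on $\Permemb$, and the identity $\int_{\Permemb}(\beltrami_{y}u)\phi\,\di\sigma=-\int_{\Permemb}\beltramigrad_{y}u\cdot\beltramigrad_{y}\phi\,\di\sigma$ (valid by \eqref{eq:a66}, since $\partial\Permemb=\emptyset$), I obtain
\begin{equation*}
 \int_{Y}\lfboth(\nabla_{y}\chi_{h}-\eh)\cdot\nabla_{y}\phi\,\di y
 +\int_{\Permemb}\beta\,\beltramigrad_{y}(\chi_{h}-y_{h})\cdot\beltramigrad_{y}\phi\,\di\sigma=0.
\end{equation*}
Taking linear combinations with weights $\eta_{h}$ and testing with $\phi=\chi_{\xi}$ (a legitimate test function) yields the orthogonality
\begin{equation*}
 \int_{Y}\lfboth(\nabla_{y}\chi_{\eta}-\eta)\cdot\nabla_{y}\chi_{\xi}\,\di y
 +\int_{\Permemb}\beta\,\beltramigrad_{y}(\chi_{\eta}-\eta\cdot y)\cdot\beltramigrad_{y}\chi_{\xi}\,\di\sigma=0. \tag{$\ast$}
\end{equation*}

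\textbf{Step 2 (quadratic form identity).} Starting from the formula \eqref{eq:dfmatD_dfav} for $\dfmat$, contracting on both sides with $\xi$ and $\eta$ and using the elementary identities $\xi^{T}(I-\nu\otimes\nu)\eta=\beltramigrad_{y}(\xi\cdot y)\cdot\beltramigrad_{y}(\eta\cdot y)$ and $\xi\cdot\beltramigrad_{y}\chi_{\eta}=\beltramigrad_{y}(\xi\cdot y)\cdot\beltramigrad_{y}\chi_{\eta}$ (idempotency of the tangential projector $I-\nu\otimes\nu$), together with the Gauss--Green formula $\int_{Y}\lfboth\xi\cdot\nabla_{y}\chi_{\eta}\,\di y=-\int_{\Permemb}[\lfboth]\chi_{\eta}(\xi\cdot\nu)\,\di\sigma$, I rewrite $\xi^{T}(\lfav I+\dfmat)\eta$ as
\begin{equation*}
 \lfav\,\xi\cdot\eta
 +\int_{\Permemb}\beta\,\beltramigrad_{y}(\xi\cdot y)\cdot\beltramigrad_{y}(\eta\cdot y-\chi_{\eta})\,\di\sigma
 -\int_{Y}\lfboth\,\xi\cdot\nabla_{y}\chi_{\eta}\,\di y.
\end{equation*}
Adding the null quantity $(\ast)$ and completing the square finally gives the sought symmetric expression
\begin{equation*}
 \xi^{T}(\lfav I+\dfmat)\eta
 =\int_{Y}\lfboth(\xi-\nabla_{y}\chi_{\xi})\cdot(\eta-\nabla_{y}\chi_{\eta})\,\di y
 +\int_{\Permemb}\beta\,\beltramigrad_{y}(\xi\cdot y-\chi_{\xi})\cdot\beltramigrad_{y}(\eta\cdot y-\chi_{\eta})\,\di\sigma.
\end{equation*}

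\textbf{Step 3 (conclusion).} The right-hand side is evidently symmetric in $(\xi,\eta)$, hence $\lfav I+\dfmat$ is symmetric. Setting $\eta=\xi$ yields $\xi^{T}(\lfav I+\dfmat)\xi=E(\xi)\ge 0$; equality in the bulk term forces $\nabla_{y}\chi_{\xi}=\xi$ a.e.\ in $Y$, whence $\chi_{\xi}-\xi\cdot y$ is locally constant. Since $\chi_{\xi}$ is $Y$-periodic with zero average whereas $\xi\cdot y$ is $Y$-periodic only for $\xi=0$, we conclude $\xi=0$ and the strict positivity follows.

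\textbf{Main obstacle.} The delicate point is Step 2: one has to match the three distinct surface contributions of \eqref{eq:dfmatD_dfav}---the jump term $[\lfboth](\nu\otimes\chi)$, the purely geometric $(I-\nu\otimes\nu)$, and the mixed $(\nu\otimes\nu)\nabla_{y}\chi-\nabla_{y}\chi$---with a single Laplace--Beltrami quadratic form, and this requires both the idempotency of $I-\nu\otimes\nu$ and the closed-surface integration by parts. Once the weak cell formulation of Step 1 is available the rest is bookkeeping.
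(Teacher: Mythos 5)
Your proposal is correct and follows essentially the same route as the paper: both hinge on the weak formulation of the cell problem (your $(\ast)$ is the paper's identity obtained by testing against $\chi_j$) and on rewriting $(\lfav I+\dfmat)_{hj}$ as $\int_Y\lfboth\nabla(\chi_h-y_h)\cdot\nabla(\chi_j-y_j)\di y+\beta\int_{\Permemb}\beltramigrad(\chi_h-y_h)\cdot\beltramigrad(\chi_j-y_j)\di\sigma$, which is exactly the paper's final expression. The only divergence is the last step: you conclude strict positivity by a rigidity argument (vanishing of the bulk energy forces $\nabla_y\chi_\xi=\xi$, incompatible with periodicity unless $\xi=0$), whereas the paper applies Jensen's inequality and the periodicity of $\chi_h$ to obtain the explicit quantitative lower bound $\min(\lfint,\lfout)\,|\xi|^2$; both arguments are valid for the statement as given, the paper's having the minor advantage of exhibiting an ellipticity constant.
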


\begin{proof}
We first prove the symmetry. By \eqref{eq:a5bis}, we have
\begin{equation}\label{eq:a5}
-\int_{\Permemb} \beltramigrad_y y_h\cdot\beltramigrad_y\chi_j\di\sigma=
-\int_{\Permemb} (\eh-\nu_h\nu)\cdot\beltramigrad_y\chi_j\di\sigma
=-\int_{\Permemb}(\beltramigrad_y\chi_j)_h\di\sigma\,;
\end{equation}
then, taking into account \eqref{eq:pdechi_D}--\eqref{eq:circuitchi_D}, we obtain
\begin{multline}\label{eq:a6}
0=-\int_Y \lfboth\Delta_y(\chi_h-y_h)\,\chi_j\di y
=\int_Y\lfboth\nabla_y(\chi_h-y_h)\cdot\nabla_y\chi_j \di y
-\beta\int_{\Permemb}\beltrami_y(\chi_h-y_h)\,\chi_j\di\sigma
\\
=\int_Y\lfboth\nabla_y\chi_h\cdot\nabla_y\chi_j \di y
-\int_Y\lfboth\eh\cdot\nabla_y\chi_j\di y
+\beta\int_{\Permemb}\beltramigrad_y(\chi_h-y_h)\cdot\beltramigrad_y\chi_j\di\sigma
\\
=\int_Y\lfboth\nabla_y\chi_h\cdot\nabla_y\chi_j \di y
+\int_{\Permemb}[\lfboth]\nu_h\,\chi_j\di\sigma
+\beta\int_{\Permemb}\nabla^B_y\chi_h\nabla^B_y\chi_j\di\sigma
-\beta\int_{\Permemb}\nabla^B_y y_h\nabla^B_y\chi_j\di\sigma
\\
=\int_Y\lfboth\nabla_y\chi_h\cdot\nabla_y\chi_j \di y
+\int_{\Permemb}[\lfboth]\nu_h\,\chi_j\di\sigma
+\beta\int_{\Permemb}\nabla^B_y\chi_h\nabla^B_y\chi_j\di\sigma
-\beta\int_{\Permemb}(\beltramigrad_y\chi_j)_h\di\sigma\,.
\end{multline}
From \eqref{eq:dfmatD_dfav} and \eqref{eq:a6}, we can rewrite
$$
A^{hom}=\int_{\Permemb}\beta(I-\nu\otimes\nu)\di\sigma-\int_Y\lfboth(\nabla_y\chi\otimes\nabla_y\chi) \di y
-\int_{\Permemb}\beta(\nabla^B_y\chi\otimes\nabla^B_y\chi)\di\sigma\,,
$$
which gives the symmetry of the matrix $A^{hom}$ and hence the symmetry of the whole matrix $\lfav I  + \dfmat$.

Let us now prove that it is also positive definite.
Firstly, we observe that, using \eqref{eq:a5} and \eqref{eq:a6}, we obtain
\begin{equation*}
\begin{aligned}
& \hphantom{=}\int_Y \lfboth\nabla(\chi_h-y_h)\cdot\nabla(\chi_j-y_j)\di y
+\beta\int_{\Permemb}\nabla^B_y(\chi_h-y_h)\nabla^B_y(\chi_j-y_j)\di\sigma
\\
& =
\int_Y \lfboth \nabla\chi_h\cdot\nabla\chi_j\di y+\int_Y \lfboth\eh\cdot\ej\di y
-\int_Y \lfboth\nabla\chi_h\cdot\ej\di y
-\int_Y \lfboth\nabla\chi_j\cdot\eh\di y
\\
&\qquad +\beta\int_\Permemb \nabla^B\chi_h\cdot\nabla^B\chi_j\di \sigma
+\beta\int_\Permemb \nabla^B y_h\cdot\nabla^B y_j\di \sigma
\\
&\qquad -\beta\int_\Permemb \nabla^B\chi_h\cdot\nabla^B y_j\di \sigma
 -\beta\int_\Permemb \nabla^B\chi_j\cdot\nabla^B y_h\di \sigma
\\
& = \int_Y \lfboth \nabla\chi_h\cdot\nabla\chi_j\di y+\int_Y\lfboth\delta_{hj}\di y
+\int_\Permemb [\lfboth]\chi_h\nu_j\di\sigma
+\int_\Permemb [\lfboth]\chi_j\nu_h\di\sigma+
\\
&\qquad +\beta\int_\Permemb \nabla^B\chi_h\cdot\nabla^B\chi_j\di \sigma
+\beta\int_\Permemb \nabla^B y_h\cdot\nabla^B y_j\di \sigma
-\beta\int_\Permemb (\beltramigrad_y\chi_h)_j\di \sigma
-\beta\int_\Permemb (\beltramigrad_y\chi_j)_h\di \sigma
\\
& = \int_Y \lfboth \nabla\chi_h\cdot\nabla\chi_j\di y
+\int_Y\lfboth\delta_{hj}\di y
+\beta\int_\Permemb \nabla^B\chi_h\cdot\nabla^B\chi_j\di \sigma
+\beta\int_\Permemb \nabla^B y_h\cdot\nabla^B y_j\di \sigma
\\
& \quad-2\int_Y \lfboth \nabla\chi_h\cdot\nabla\chi_j\di y-2\beta\int_\Permemb\nabla^B\chi_h\nabla^B\chi_j\di\sigma
\\
& = \int_Y\lfboth\delta_{hj}\di y-\int_Y \lfboth \nabla\chi_h\cdot\nabla\chi_j\di y
+\beta\int_\Permemb(\delta_{hj}-\nu_h\nu_j)\di\sigma
-\beta\int_\Permemb\nabla^B\chi_h\nabla^B\chi_j\di\sigma\,.
\end{aligned}
\end{equation*}
Then, we can rewrite
\begin{equation*}
\begin{aligned}
&\hphantom{=}
(\lfav I  + \dfmat)_{hj}  = \int_Y \lfboth \delta_{hj}\di y + \int_\Permemb \beta\delta_{hj}\di\sigma
-\int_\Permemb \beta\nu_h\nu_j\di\sigma
\\
& \qquad -\int_Y\lfboth\nabla\chi_h\cdot\nabla\chi_j\di y
-\int_\Permemb \beta \nabla^B\chi_h\cdot\nabla^B\chi_j\di\sigma
\\
& = \int_Y \lfboth\nabla(\chi_h-y_h)\cdot\nabla(\chi_j-y_j)\di y
+\int_\Permemb \beta\nabla^B(\chi_h-y_h)\cdot\nabla^B(\chi_j-y_j)\di \sigma\,.
\end{aligned}
\end{equation*}
Finally, setting $\lfboth_{min} =\min(\lfint,\lfout)$ and
using Jensen's inequality, we obtain
\begin{equation*}
\begin{aligned}
&\hphantom{=}
\sum_{h,j=1}^{N}(\lfav I  + \dfmat)_{hj} \xi_h\xi_j =
\int_Y \sum_{h,j=1}^{N}\lfboth(\nabla\chi_h\xi_h-\eh\xi_h)\cdot(\nabla\chi_j\xi_j-\ej\xi_j)\di y
\\
&\qquad+\int_\Permemb \sum_{h,j=1}^{N}\beta
\nabla^B(\chi_h\xi_h-y_h\xi_h)\cdot\nabla^B(\chi_j\xi_j-y_j\xi_j)\di \sigma
\\
& \geq \lfboth_{min} \int_Y \big|\sum_{h=1}^{N}(\nabla\chi_h\xi_h-\eh\xi_h)\Big|^2\di y
+\beta\int_\Permemb \big|\sum_{h=1}^{N}\nabla^B(\chi_h\xi_h-y_h\xi_h)\big|^2\di \sigma
\\
& \geq \lfboth_{min}\left|\int_Y \sum_{h=1}^{N}(\nabla\chi_h\xi_h-\eh\xi_h)\di y\right|^2
+\beta|\Permemb|\left|\frac{1}{|\Permemb|}
\int_\Permemb \sum_{h=1}^{N}\nabla^B(\chi_h\xi_h-y_h\xi_h)\di \sigma\right|^2
\\
& \geq \lfboth_{min}\sum_{j=1}^{N}\left( \sum_{h=1}^{N}(\xi_h\int_Y
\frac{\partial\chi_h}{\partial y_j}\di y-\delta_{hj}\xi_h)\right)^2
+\frac{\beta}{|\Permemb|}\left|\sum_{h=1}^{N}\int_\Permemb \nabla^B(\chi_h\xi_h-y_h\xi_h)\di \sigma\right|^2
\\
& \geq \lfboth_{min}\sum_{j=1}^{N}\left( \sum_{h=1}^{N}\xi_h\int_{\partial Y}
\chi_h\,n_j\di \sigma-\xi_j\right)^2
= \lfboth_{min}|\xi|^2
\end{aligned}
\end{equation*}
where we have denoted by $n=(n_1,\dots,n_N)$ the outward unit normal to $\partial Y$.
Moreover, we
remark that the last integral vanishes because of the periodicity of the cell
function $\chi_h$.

This proves that the homogenized matrix is positive definite and concludes the theorem.
\end{proof}

\begin{Remark}
\label{r:r1}
We note that the homogenized matrix is positive definite independently of the strict positivity of
$\beta$.
\end{Remark}
\medskip

Once proved Theorem \ref{l:l1}, the existence of a unique solution for
equation \eqref{eq:limitPDE_kpinf} complemented
with suitable initial and boundary conditions is standard. The next proposition state the regularity of this solution,
which is a property needed in order to obtain the error estimate.

\begin{prop}\label{p:p3}
Assume that $\overline u_0\in \CC^\infty_c(\Om)$ (i.e. $\overline u_0$ has compact support in $\Om$).
Then, the solution $u_0$ to equation \eqref{eq:limitPDE_kpinf} satisfying the
homogeneous boundary condition on $\partial\Om\times[0,T]$
and the initial condition $u(x,0)=\overline u_0(x)$ in $\Om$ belongs to $\CC^\infty(\overline\Om\times[0,T])$.
\end{prop}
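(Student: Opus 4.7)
The plan is to observe that equation \eqref{eq:limitPDE_kpinf} is a linear second-order parabolic equation in divergence form with \emph{constant} coefficients, whose principal part $\lfav I+\dfmat$ is symmetric and uniformly elliptic by Theorem \ref{l:l1}, and whose zero-order time coefficient $\gammafinale$ is a strictly positive constant. The proposition will then follow from classical regularity theory for linear parabolic problems on smooth bounded domains, once the compatibility conditions between initial and boundary data are verified at every order at the corner $\partial\Om\times\{0\}$.

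After dividing through by $\gammafinale$, I would rewrite the equation as $u_{0t}=\mathcal L u_0$, where $\mathcal L u_0:=\gammafinale^{-1}\Div((\lfav I+\dfmat)\nabla u_0)$ is a constant-coefficient uniformly elliptic operator; in particular $\mathcal L$ commutes with spatial differentiation. The next step is to check the compatibility conditions at $t=0$. Since $\overline u_0\in\CC^\infty_c(\Om)$, the function $\overline u_0$ and every spatial derivative $D^\alpha\overline u_0$ vanish in a full neighbourhood of $\partial\Om$. Differentiating the equation in $t$ and evaluating at $t=0$ gives inductively the formal identities $\partial_t^k u_0(\cdot,0)=\mathcal L^k\overline u_0$ for every $k\in\NN$, and each $\mathcal L^k\overline u_0$ is itself smooth and compactly supported in $\Om$. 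Consequently every time derivative of $u_0$ at $t=0$ matches the zero Dirichlet datum at every order on $\partial\Om$, so the compatibility conditions of all orders are automatic.

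Having secured uniform ellipticity, constant coefficients, smooth data and all compatibility conditions, I would invoke the classical regularity results for linear parabolic initial-boundary-value problems on a smooth bounded domain (as in Ladyzhenskaya--Solonnikov--Ural'tseva or Lions--Magenes): for every $k\ge 1$ the problem admits a unique solution in the anisotropic Sobolev space $H^{2k,k}(\Om_T)$, because $\overline u_0\in H^{2k}(\Om)\cap H^1_0(\Om)$ and the compatibility conditions up to order $k-1$ hold. Intersecting over $k$ and applying Sobolev embedding yields $u_0\in \CC^\infty(\overline\Om\times[0,T])$. The only point requiring genuine verification is the infinite tower of compatibility conditions, which is precisely why the compact-support hypothesis on $\overline u_0$ has been imposed; the background smoothness of $\partial\Om$ is provided by the geometric setting of Section \ref{ss:geometric} (and can be strengthened, if needed, to $\CC^\infty$ without affecting any earlier argument).
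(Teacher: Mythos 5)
Your proposal is correct and follows essentially the same route as the paper, whose entire proof is a citation of the classical parabolic regularity theorem (Friedman, Theorem 12 in Section 5); you have simply unpacked what that citation entails, namely constant coefficients, uniform ellipticity from Theorem \ref{l:l1}, and the fact that compact support of $\overline u_0$ makes the compatibility conditions of all orders automatic. That last observation is precisely the reason the hypothesis $\overline u_0\in\CC^\infty_c(\Om)$ is imposed, so your write-up is a faithful (and more informative) expansion of the paper's one-line argument.
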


\begin{proof}
The result can be obtained applying \cite[Theorem 12 in Section 5]{Friedman:1999}.
\end{proof}

\begin{Remark}\label{r:2}
Actually, the asserted $\CC^\infty$-regularity of the homogenized solution $u_0$ is far from being optimal
in order to obtain the error estimate proved in Section \ref{s:error}. Indeed, to this purpose, it
is enough to have that $u_0\in \CC^0\big([0,T];\CC^3(\overline\Om)\big)$
and this is guaranteed if, for instance
$\overline u_0\in \CC^4(\overline\Om)$ and satisfies the compatibility conditions
\begin{equation}\label{eq:compatib1}
L_{hom}\overline u_0(x)=0\,,\quad\text{and}\quad L_{hom}^2\overline u_{0} (x):=
L_{hom}\big(L_{hom}\overline u_0(x))=0\,,
\quad \text{on $\partial\Om$,}
\end{equation}
where $L_{hom}=-\Div \big((\lfav I  + \dfmat) \nabla \big)$, with $\lfav$ and $\dfmat$ defined in
\eqref{eq:dfmatD_dfav}.
However, we prefer the simpler assumptions of Proposition \ref{p:p3}, since we
are not interested in stating which are the minimal conditions
to be satisfied by the initial data in order to obtain the optimal regularity of the homogenized solution.
\end{Remark}

For further use
(taking into account the system satisfied by $u_2$ and \eqref{eq:limitPDE_kpinf}),
we introduce the factorization of the function $u_2$ in terms of the homogenized
solution $u_0$; i.e.,
\begin{equation}\label{eq:a1}
u_2(x,y,t)=\widetilde\chi_{ij}(y) \frac{\partial^2u_0}{\partial x_ix_j}(x,t)\,,
\qquad i,j=1,\dots,N\,,
\end{equation}
where the functions $\widetilde\chi_{ij}:Y\to\R$ satisfy
\begin{alignat}2
  \label{eq:pdechi_ij}
    -\lfboth \Delta_y\widetilde\chi_{ij} =-\frac{\dfboth}{\gammafinale}(\lfav\delta_{ij}&+a^{hom}_{ij})
  + \lfboth\delta_{ij}  -2\lfboth\frac{\partial \chi_i}{\partial y_j}=:F
  \,,&\qquad&\!\!\!\!\!\!\!\!\!\!\!\!\!\!\!\!\!\text{in $\Perint$, $\Perout$;}\\
  \label{eq:fluxcontchi_ij}
   \beta\beltrami_y\widetilde\chi_{ij}+[\lfboth\nabla_y\widetilde \chi_{ij}\cdot\nu]&=
   {\begin{aligned}
   & \\
&   \frac{\alpha}{\gammafinale}(\lfav\delta_{ij}+a^{hom}_{ij})-
\beta\big(\delta_{ij}-(\nu\otimes\nu)_{ij}\big)
   \\
\!\!\!\!\!\!\!\!\!\!\!\!\!\!\!\!\!\!\!\!\!\!\!\!\!\!\!\!\!\!\!\!\!\!\!\!\!\!\!\!\!\!\!\!
+2\beta \big(I-(\nu\otimes\nu)\big)_i &\cdot\nabla\chi_j-\beta\nu_j\chi_i\Div\nu+[\lfboth\nu_i]\chi_j=:G
  \,,
  \end{aligned}}
  &\qquad &\text{on $\Permemb$;}\\
  \label{eq:circuitchi_ij}
  [\widetilde  \chi_{ij}] &=0\,,&\qquad    &\text{on $\Permemb$.}
\end{alignat}
The functions $\widetilde\chi_{ij}$ are also required to be periodic in
$Y$, with zero integral average on $Y$.
In order to obtain \eqref{eq:pdechi_ij}--\eqref{eq:circuitchi_ij} we have taken into account
\eqref{eq:beltramixyoperators}, which gives
$$
\Div^B_x(\nabla_y^B \phi)+\Div^B_y(\nabla_x^B \phi) =
2(\delta_{ij}-\nu_i\nu_j)\frac{\partial^2\phi}{\partial x_i\partial y_j}-\nu_j\frac{\partial\nu_i}{\partial y_i}
\frac{\partial\phi}{\partial x_j}\,,
$$
with $\phi(x,y,t)=u_1(x,y,t)=-\chi(y)\cdot\nabla_xu_0(x,t)$ and the usual summation convention for repeated indexes.
By \cite{Amar:Gianni:2016C}, problem \eqref{eq:pdechi_ij}--\eqref{eq:circuitchi_ij} admits
a unique solution $\widetilde\chi_{ij}\in\CC^\infty_\#(Y)$, for $i,j=1,\dots,N$, since it is easy to
check that
$$
\int_Y F\di y=\int_{\Permemb} G\di\sigma\,.
$$

\section{Error estimate}\label{s:error}
In this section we prove that the limit $u$ of the sequence $\{u_\eps\}$ of the solutions of problem \eqref{eq:PDEin}--\eqref{eq:InitData} coincides with the solution $u_0$ of equation \eqref{eq:limitPDE_kpinf}.
In order to achieve this result, we will state an error estimate for the sequence $\{u_\eps\}$, which gives
the rate of convergence of such a sequence to the homogenized function $u_0$, in a suitable norm, thus obtaining a stronger convergence result with respect to the one obtained in our previous paper \cite{Amar:Gianni:2016A}.
However, this result needs extra-regularity assumptions on the initial data
$\overline u_0(x)$ (see Proposition \ref{p:p3} and Remark \ref{r:2}), which assure more regularity of the homogenized solution $u_0$.

\begin{thm}\label{t:t1}
Assume that $\overline u_0\in \CC^\infty_c(\Om)$.
Let $u_{0}$ be the smooth solution of \eqref{eq:limitPDE_kpinf}, satisfying
the initial condition $u_0(x,0)=\overline u_0(x)$ in $\Om$ and
the boundary condition $u_0(x,t)=0$ on $\partial\Om\times(0,T)$; moreover,
let $u_1$ be the function defined in \eqref{eq:u0factor}.
Then
\begin{alignat}2
\label{eq:error_na}
&  \Vert u_{\eps}-(u_{0}+\eps u_1)\Vert_{L^2(0,T;H^1(\Om))} &\le \const \sqrt\eps\,,
\\
\label{eq:error_nb}
&  \Vert u_{\eps}-u_{0}\Vert_{L^2(\Om_T)} &\le \const \sqrt\eps\,,
 \end{alignat}
for a proper constant $\const>0$, independent of $\eps$.
\end{thm}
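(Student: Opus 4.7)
The plan is to follow the classical Bensoussan--Lions--Papanicolaou scheme: treat the truncated two-scale expansion as a near-solution, measure the residual it produces when plugged into the microscopic problem \eqref{eq:PDEin}--\eqref{eq:InitData}, and apply to the difference an energy identity of the type \eqref{eq:energy00}. Concretely, I would set
\[
v_\eps(x,t) := u_0(x,t) + \eps\, u_1(x,x/\eps,t) + \eps^2\, u_2(x,x/\eps,t),
\]
with $u_1$ from the factorization \eqref{eq:u0factor} and $u_2$ from \eqref{eq:a1}. By the very choice of $\cpo$, $\cpi$, $\cpii$ and of the homogenized equation \eqref{eq:limitPDE_kpinf}, substituting $v_\eps$ into the bulk equation and into the interface condition, and expanding the differential operators via \eqref{eq:dermicmac} and \eqref{eq:beltramixyoperators}, kills every term at orders $\eps^{-2},\eps^{-1},\eps^{0}$, both in $\Om\setminus\Memb$ and on $\Memb$. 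What remains is an $O(\eps)$ source in the bulk and an $O(\eps)$ source on $\Memb$, with $L^2$ norms bounded uniformly in $\eps$ thanks to the $\CC^\infty$ regularity of $u_0$ furnished by Proposition \ref{p:p3} together with the $\CC^\infty_\#(Y)$ regularity of $\chi$ and $\widetilde\chi$ recalled after \eqref{eq:circuitchi_D} and \eqref{eq:circuitchi_ij}.

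The substantive obstacle is that $v_\eps$ does \emph{not} satisfy the Dirichlet condition \eqref{eq:BoundData}, since the $x/\eps$-dependence of $u_1,u_2$ prevents them from vanishing on $\partial\Om$. The standard remedy is a boundary-layer cut-off: pick $\theta_\eps\in\CC^\infty_c(\Om)$ with $\theta_\eps\equiv 1$ on $\{x\in\Om:\dist(x,\partial\Om)\ge c\eps\}$ and $|\nabla\theta_\eps|\le C/\eps$, and work with
\[
w_\eps(x,t) := u_0(x,t) + \eps\,\theta_\eps(x)\, u_1(x,x/\eps,t) + \eps^2\,\theta_\eps(x)\, u_2(x,x/\eps,t).
\]
The geometric hypothesis $\dist(\Memb,\partial\Om)\ge\gamma_0\eps$ is crucial here: it guarantees that $\theta_\eps\equiv 1$ in a full neighborhood of $\Memb$, so that the interface conditions and all Laplace--Beltrami terms behave exactly as for $v_\eps$. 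Then $w_\eps$ lies in $\XX_0(\Om)$, the remainder $r_\eps:=u_\eps-w_\eps$ belongs to the same space, and may be tested against itself in the weak formulation.

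I would then derive the identity underlying \eqref{eq:energy00} for $r_\eps$ and estimate each source term in turn. Three types arise: the $O(\eps)$ bulk and interface residuals coming from $v_\eps$, contributing $O(\eps)$ after integration in time; the boundary-layer terms carried by $\nabla\theta_\eps$, supported on a strip of measure $\eps$ with $|\nabla\theta_\eps|^2\sim\eps^{-2}$ and therefore contributing $O(\sqrt\eps)$ in $L^2$; and the initial error $r_\eps(\cdot,0)=-\eps\theta_\eps u_1(\cdot,\cdot/\eps,0)-\eps^2\theta_\eps u_2(\cdot,\cdot/\eps,0)$, which is $O(\eps)$ in $L^2(\Om)$ and of size $O(\sqrt\eps)$ in the weight $\sqrt\eps\,\|\cdot\|_{L^2(\Memb)}$ by the trace inequality used in \eqref{eq:a79}. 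A Gronwall step on the resulting inequality yields $\|r_\eps\|_{L^2(0,T;H^1(\Om))}\le \const\sqrt\eps$, and adding the trivial $\|\eps(1-\theta_\eps)u_1\|_{L^2(0,T;H^1(\Om))}=O(\sqrt\eps)$ (again a boundary-strip volume argument) and $\|\eps^2 u_2\|=O(\eps)$ produces \eqref{eq:error_na}. Estimate \eqref{eq:error_nb} follows at once from \eqref{eq:error_na} since $\|\eps u_1\|_{L^2(\Om_T)}\le\const\,\eps$. Beyond this bookkeeping, the main technical hurdle is the careful treatment of the tangential derivatives in \eqref{eq:Circuit}: in the expansion \eqref{eq:dataexp} the operator $A^B_1$ mixes $\partial_x$ and $\partial_y$ in a way dictated by the shape of $\nu$, so verifying the exact cancellation of the $\eps^{0}$ interface contribution and controlling the order-$\eps$ remainder in the surface norm $\sqrt\eps\,\|\cdot\|_{L^2(\Memb)}$ is where the Laplace--Beltrami operator forces the most delicate computations.
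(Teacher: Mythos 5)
Your proposal is correct and follows essentially the same route as the paper: a Bensoussan--Lions--Papanicolaou truncated expansion, a boundary-layer cut-off vanishing on the $\gamma_0\eps$-strip so that $\Memb$ is untouched, an energy identity for the remainder tested against itself, and Gronwall plus Poincar\'e. The only (inessential) deviation is that you absorb $\eps^2 u_2$ into the approximant so the interface residual cancels pointwise up to order $\eps$, whereas the paper keeps $u_2$ out of the rest function $\rteps$ and instead disposes of the resulting $O(1)$ surface terms $\beta\beltrami_y u_2$ and $[\lfbothe\nabla_y u_2\cdot\nu_\eps]$ by an integration by parts on the closed surface $\Memb$ and an exact cancellation with a bulk term coming from $\oldfeps$.
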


\begin{proof}
Let us define the rest function
\begin{equation*}
  \reps(x,t)=\big(u_{\eps}(x,t)-u_{0}(x,t)-\eps u_{1}(x,x/\eps,t)\big)\eps^{-1}\,,
  \qquad x\in\Om\,, t>0\,.
\end{equation*}
Separately in $\Omint$ and in $\Omout$, we get
\begin{align*}
\dfbothe\frac{\partial \reps}{\partial t}  -\Div (\lfbothe \nabla \reps)&=
\frac{1}{\eps}\Big\{ -\dfbothe\frac{\partial u_0}{\partial t} + \Div (\lfbothe\nabla u_{0})
-\dfbothe\eps\frac{\partial u_1}{\partial t} + \eps \Div( \lfbothe \nabla u_{1})
  \Big\}
\\
  &= \frac{1}{\eps} \Big\{-\dfbothe\frac{\partial u_0}{\partial t} +
  \lfbothe \Lapl_{x}u_{0} + 2 \lfbothe u_{1x_{h}y_{h}} \Big\}
  -\dfbothe\frac{\partial u_1}{\partial t} +  \lfbothe \Lapl_{x}u_{1}+ \frac{1}{\eps^2} \lfbothe
  \Lapl_{y}u_{1}
  \\
  &=    -\frac{1}{\eps} \lfbothe \Lapl_{y} u_{2} -\dfbothe\frac{\partial u_1}{\partial t} + \lfbothe \Lapl_{x}
  u_{1}\qefed \oldfeps-\dfbothe\frac{\partial u_1}{\partial t} \,.
\end{align*}
Moreover,
\begin{equation*}
[\reps]=0\,,\qquad \reps(x,0)=-u_1(x,x/\eps,0)=\chi(x/\eps)\cdot\nabla_xu_0(x,0)
=\chi(x/\eps)\cdot\nabla_x\overline u_0(x,0)\,,
\end{equation*}
and
\begin{align*}
& \eps\alpha \frac{\partial\reps}{\partial t}-\eps\beta\beltrami\reps
= \frac{1}{\eps}\left\{\eps\alpha \frac{\partial u_\eps}{\partial t}-\eps\beta\beltrami u_\eps
-\eps\alpha \frac{\partial u_0}{\partial t}+\eps\beta\beltrami u_0\right\}
\\
&\qquad -\left\{\eps\alpha \frac{\partial u_1}{\partial t}-\eps\beta\beltrami u_1\right\}
\\
& = \frac{1}{\eps}[\lfbothe\nabla u_\eps\cdot\nu_\eps]-\alpha \frac{\partial u_0}{\partial t}
+\beta\beltrami_x u_0+\beta \Div^B_x\nabla^B_y u_1+\beta \Div^B_y\nabla^B_x u_1
 \\
 &\qquad  -\eps\alpha\frac{\partial u_1}{\partial t}+\eps\beta\beltrami_x u_1
+ \frac{1}{\eps}\left(\beta\beltrami_yu_1+\beta\Div^B_y\nabla^B_x u_0+\beta\Div^B_x\nabla^B_y u_0\right)
+\frac{1}{\eps^2}\beta\beltrami_y u_0
\\
 &  =  \frac{1}{\eps}[\lfbothe\nabla u_\eps\cdot\nu_\eps]-[\lfbothe(\nabla_xu_1+\nabla_yu_2)\cdot\nu_\eps]
 -\beta\beltrami_yu_2
 \\
 &\qquad -\eps(\alpha\frac{\partial u_1}{\partial t}-\beta\beltrami_{x}u_1)
 -\frac{1}{\eps}[\lfbothe(\nabla_xu_0+\nabla_y u_1)\cdot\nu_\eps]
 \\
 & =[\lfbothe\nabla \reps\cdot\nu_\eps]-\eps(\alpha\frac{\partial u_1}{\partial t}-\beta\beltrami_{x}u_1)-[\lfbothe\nabla_yu_2\cdot\nu_\eps]-\beta\beltrami_yu_2 \,,
  \end{align*}
where we have taken into account the problems satisfied by $u_1$ and $u_2$
($u_{1}$ and $u_{2}$ are defined in Subsection \ref{ss:two_scale})
and the fact that $\Div^B_x\nabla^B_y u_0=0$ and $\beltrami_y u_0=0$.

Let us now introduce the corrected rest function
\begin{equation*}
  \rteps=\reps + u_{1}\phi_{\eps}\,,
\end{equation*}
where $\phi_{\eps}$ is a cut-off function equal to $1$ in a
neighbourhood of $\partial\Om$, and such that
\begin{equation*}
  \phi_{\eps}(x)=0 \quad\text{if}\quad \dist(x,\partial\Om)\ge
  \gamma_{0}\eps\,.
\end{equation*}
Clearly, $\phi_{\eps}\equiv 0$ on $\Memb$ (since $\dist(\Memb,\partial\Om)\geq \const_0\eps$, by the
assumptions made in Subsection \ref{ss:geometric}), so that $\reps=\rteps$ on $\Memb$.
We may assume $0\le\phi_{\eps}\le1$,
$\abs{\nabla\phi_{\eps}}\le\gamma/\eps$.
The function $\rteps$ satisfies $[\rteps]=0$ on $\Memb$ and
\begin{alignat}2
  \label{eq:restPDE}
  \dfbothe\frac{\partial\rteps}{\partial t}-\lfbothe \Lapl\rteps&=\oldfeps
  -\dfbothe\frac{\partial u_1}{\partial t} +\dfbothe\phi_{\eps}\frac{\partial u_1}{\partial t}
  -\lfbothe\Lapl(u_{1}\phi_{\eps})\,,&
  \qquad & \text{in $\Omint$, $\Omout$;}\\
  \label{eq:restinitdata}
  \rteps(x,0)&=\chi(x/\eps)\cdot\nabla_x\overline u_0(x,0)(1-\phi_\eps) \,,&\qquad & \text{on $\Om$;}\\
  \label{eq:restDir}
  \rteps&=0\,, &\qquad &\text{on $\partial\Om$,}
\end{alignat}
and on $\Memb$
\begin{equation}\label{eq:restfluxcont}
  \begin{aligned}
  \eps\alpha \frac{\partial\rteps}{\partial t}-\eps\beta\beltrami\rteps
  =    & [\lfbothe\nabla \reps\cdot\nu_\eps]-\eps(\alpha\frac{\partial u_1}{\partial t}-\beta\beltrami_{x}u_1)-[\lfbothe\nabla_yu_2\cdot\nu_\eps]-\beta\beltrami_yu_2
   \\
  =  & [\lfbothe\nabla \rteps\cdot\nu_\eps]-\eps(\alpha\frac{\partial u_1}{\partial t}-\beta\beltrami_{x}u_1)-[\lfbothe\nabla_yu_2\cdot\nu_\eps]-\beta\beltrami_yu_2   \,.
    \end{aligned}
  \end{equation}
Note that the correction $u_{1}\phi_{\eps}$ has been introduced
precisely in order to guarantee \eqref{eq:restDir}.
Multiply \eqref{eq:restPDE} by $\rteps$ and integrate by parts;
by virtue of \eqref{eq:restDir}, we get
\begin{multline}
  \label{eq:restenergy_i}
\intotom \big\{\oldfeps  - \lfbothe \Lapl(u_{1}\phi_{\eps})\big\}
  \rteps\di x\di\tau
  -\intotom \big\{\dfbothe\frac{\partial u_1}{\partial  \tau}
  (1-\phi_{\eps})\big\}\rteps \di x\di\tau=
  \\
\frac{1}{2}\intotom\dfbothe\frac{\partial \rteps^2}{\partial  \tau}\di x\di  \tau
+  \intotom \lfbothe \abs{\nabla \rteps}^{2}\di x\di\tau +
  \intotme [\lfbothe \nabla\rteps\cdot\nu_\eps]
  \rteps \di\sigma\di\tau =
\\
 \frac{1}{2}\int_{\Om}\dfbothe\rteps^2(x,t)\di x-\frac{1}{2}\int_{\Om}\dfbothe\rteps^2(x,0)\di x
+  \intotom \lfbothe \abs{\nabla \rteps}^{2}\di x\di\tau
\\
+\frac{\eps}{2}  \int_{\Memb}\alpha \rteps^2(x,t)\di\sigma
- \frac{\eps}{2}  \int_{\Memb}\alpha \rteps^2(x,0)\di\sigma
+\eps\beta \intotme|\nabla^B  \rteps|^2 \di\sigma\di\tau
\\
+\eps \intotme(\alpha\frac{\partial u_1}{\partial t}-\beta\beltrami_xu_1)\rteps\di\sigma\di\tau
 +\intotme(\beta\beltrami_yu_2+[\lfboth\nabla_yu_2\cdot\nu_\eps])\rteps\di\sigma\di\tau\,.
\end{multline}
This implies
\begin{multline*}
\frac{1}{2}\int_{\Om}\dfbothe\rteps^2(x,t)\di x
+\frac{\eps}{2}  \int_{\Memb}\alpha \rteps^2(x,t)\di\sigma
+  \intotom \lfbothe \abs{\nabla \rteps}^{2}\di x\di\tau
+  \eps\beta \intotme|\nabla^B  \rteps|^2 \di\sigma\di\tau
=
\\
\frac{1}{2}\int_{\Om}\dfbothe\rteps^2(x,0)\di x+
\frac{\eps}{2}  \int_{\Memb}\alpha \rteps^2(x,0)\di\sigma
-\eps \intotme(\alpha\frac{\partial u_1}{\partial  \tau}-
\beta\beltrami_xu_1)\rteps\di\sigma\di\tau
\\
 -\intotme(\beta\beltrami_yu_2+[\lfbothe\nabla_yu_2\cdot\nu_\eps])\rteps\di\sigma\di\tau
 \\
+ \intotom \big\{\oldfeps  - \lfbothe \Lapl(u_{1}\phi_{\eps})\big\}
  \rteps\di x\di\tau
  -\intotom \big\{\dfbothe\frac{\partial u_1}{\partial  \tau} (1-\phi_{\eps})\big\}\rteps \di x\di\tau
\end{multline*}
Next, compute
\begin{multline}
  \label{eq:rest_feps}
  \intotom \oldfeps   \rteps\di x\di\tau = \intotom
  \lfbothe\big\{-\frac{1}{\eps}\Lapl_{y}u_{2} +  \Lapl_{x}u_{1}\big\}
  \rteps \di x\di\tau\\
  = \intotom \lfbothe\big\{-\frac{1}{\eps}\Lapl_{y}u_{2} - \Div_{x}
  (\nabla_{y} u_{2})\big\} \rteps \di x\di\tau +
  \intotom \lfbothe\big\{ \Div_{x} (\nabla_{y} u_{2}) +
  \Lapl_{x}u_{1}\big\}\rteps\di x\di\tau\\
  =-\intotom \Div (\lfbothe \nabla_{y}u_{2}) \rteps\di x\di\tau +
  \intotom \big\{\lfbothe \Div_{x} (\nabla_{y} u_{2}) +\lfbothe
  \Lapl_{x}u_{1}\big\}\rteps\di x\di\tau\\
  =  \intotme [\lfbothe
  \nabla_{y}u_{2} \cdot\nu_\eps ]\rteps\di\sigma\di\tau +
  \intotom \lfbothe \nabla_{y}u_{2}\cdot \nabla \rteps\di x\di\tau \\
  + \intotom \big\{\lfbothe \Div_{x} (\nabla_{y} u_{2}) +\lfbothe
  \Lapl_{x}u_{1}\big\}\rteps\di x\di\tau
\end{multline}
Note that the last integral in \eqref{eq:rest_feps} can be bounded in the following way
\begin{equation*}
  \intotom \big\{\lfbothe \Div_{x} (\nabla_{y} u_{2}) +\lfbothe
  \Lapl_{x}u_{1}\big\}\rteps\di x\di\tau \le \const(\delta) + \delta \intotom
  \rteps^{2}\di x\di\tau\,,
\end{equation*}
where $\delta>0$ will be chosen in the following. We exploit here the
estimate
\begin{equation}
  \label{eq:regest_j}
 \intotom (u_{2x_{i}y_{i}}^{2} + u_{1x_{i}x_{i}}^{2})\di x\di \tau \le
 \const\,,
\end{equation}
which is a consequence of the regularity of the cell functions $\chi$ and $\widetilde \chi$
(recall \eqref{eq:u0factor}--\eqref{eq:circuitchi_D} and \eqref{eq:a1}--\eqref{eq:circuitchi_ij})
and of the homogenized function $u_0$.
Similarly, for $\delta'=\min(\lfint,\lfout)/2$,
\begin{multline}
  \label{eq:lapl_au1phieps}
  -\intotom \lfbothe \Lapl(u_{1}\phi_{\eps}) \rteps\di x\di\tau = \intotom
  \lfbothe\nabla (u_{1}\phi_{\eps})\cdot \nabla \rteps \di x\di\tau \le \delta'
  \intotom \abs{\nabla\rteps}^{2}\di x\di\tau \\
  + \frac{\const(\delta')}{\eps^{2}} \abs{\{x\in\Om\mid \dist(x,\partial\Om)\le
  \gamma_{0}\eps\}} \le
  \delta' \intotom \abs{\nabla\rteps}^{2}\di
  x\di\tau +\frac{\const(\delta')}{\eps}\,,
\end{multline}
where, again due to the stated regularity of $\chi$ and $u_0$, we used
\begin{equation}
  \label{eq:regest_jj}
  \sup\limits_{x\in\Om\,,\,y\in Y\,,\,0<t<T} \big\{\abs{u_{1}}
  +\abs{\nabla_{x}u_{1}} +\abs{\nabla_{y}u_{1}}\big\} (x,y,t) <+\infty\,.
\end{equation}
Moreover, for $\delta^{\prime\prime}$ which will be chosen later, we obtain
\begin{multline*}
\intotme(\beta\beltrami_yu_2)\rteps\di\sigma\di\tau=
\eps\beta\intotme(\frac{1}{\eps}\Div^B_y\nabla_y^Bu_2+\Div^B_x\nabla_y^Bu_2)\rteps\di\sigma\di\tau
\\
-\eps\beta\intotme(\Div^B_x\nabla_y^Bu_2)\rteps\di\sigma\di\tau=
\\
-\eps\beta\intotme\nabla_y^Bu_2\nabla^B\rteps\di\sigma\di\tau
-\eps\beta\intotme(\Div^B_x\nabla_y^Bu_2)\rteps\di\sigma\di\tau=
\\
\const(\delta^{\prime\prime})+\delta^{\prime\prime}\eps\intotme|\nabla^B\rteps|^2\di\sigma\di\tau
+\const(\delta^{\prime\prime})+\delta^{\prime\prime}\eps\intotme\rteps^2\di\sigma\di\tau\,.
\end{multline*}
Here, we use
\begin{equation*}
\eps\intotme (|\nabla_y^Bu_{2}|^{2} + |\Div^B_x\nabla_y^Bu_2|^{2})\di \sigma\di \tau \le \const\,,
\end{equation*}
which is again a consequence of the regularity of $\widetilde\chi$ and $u_0$.

Combining the previous estimates, we have
\begin{multline}
  \label{eq:restenergy_ii}
  \frac{1}{2}\int_{\Om}\dfbothe\rteps^2(x,t)\di x
+\frac{\eps}{2}  \int_{\Memb}\alpha \rteps^2(x,t)\di\sigma
+  \intotom \lfbothe \abs{\nabla \rteps}^{2}\di x\di\tau
+  \eps\beta \intotme|\nabla^B  \rteps|^2 \di\sigma\di\tau
\le
\\
\frac{1}{2}\int_{\Om}\dfbothe\rteps^2(x,0)\di x+
\frac{\eps}{2}  \int_{\Memb}\alpha \rteps^2(x,0)\di\sigma
-\eps \intotme(\alpha\frac{\partial u_1}{\partial  \tau}-
\beta\beltrami_xu_1)\rteps\di\sigma\di\tau
\\
+\const(\delta^{\prime\prime})+\delta^{\prime\prime}\eps\intotme|\nabla^B\rteps|^2\di\sigma\di\tau
+\delta^{\prime\prime}\eps\intotme\rteps^2\di\sigma\di\tau
 -\intotme[\lfbothe\nabla_yu_2\cdot\nu_\eps]\rteps\di\sigma\di\tau
 \\
+  \intotme [\lfbothe
  \nabla_{y}u_{2} \cdot\nu_\eps ]\rteps\di\sigma\di\tau +
  \intotom \lfbothe \nabla_{y}u_{2}\cdot \nabla \rteps\di x\di\tau
 +\const(\delta) + \delta \intotom \rteps^{2}\di x\di\tau
 \\
 +\delta' \intotom \abs{\nabla\rteps}^{2}\di
  x\di\tau +\frac{\const(\delta')}{\eps}
  -\intotom \big\{\dfbothe\frac{\partial u_1}{\partial  \tau}
  (1-\phi_{\eps})\big\}\rteps \di x\di\tau\le
  \\
\const+\const(\delta^{\prime\prime\prime})+
\eps \delta^{\prime\prime\prime}\intotme\rteps^2\di\sigma\di\tau
\\
+\const(\delta^{\prime\prime})+\delta^{\prime\prime}\eps\intotme|\nabla^B\rteps|^2\di\sigma\di\tau
+\delta^{\prime\prime}\eps\intotme\rteps^2\di\sigma\di\tau
 \\
 +\const(\delta^{\prime\prime\prime})+\delta^{\prime\prime\prime} \intotom |\nabla \rteps|^2\di x\di\tau
  +\const(\delta) + \delta \intotom \rteps^{2}\di x\di\tau
 \\
 +\delta' \intotom \abs{\nabla\rteps}^{2}\di x\di\tau
 +\frac{\const(\delta')}{\eps}
 +\const(\delta^{\prime\prime\prime})+\delta^{\prime\prime\prime}
\intotom \rteps^2 \di x\di\tau\,,
\end{multline}
where $\delta^{\prime\prime\prime}$ will be chosen later.
Finally, using Poincar\'{e}'s inequality, Gronwall's lemma and absorbing the gradient term in
\eqref{eq:restenergy_ii} into the left hand side (which is possible choosing
$\delta,\delta^\prime, \delta^{\prime\prime},\delta^{\prime\prime\prime}$ sufficiently small), we get
\begin{equation}
  \label{eq:restenergy_vi}
  \intotom \abs{\nabla \rteps}^{2}\di x\di\tau
  \le \frac{\const}{\eps}\,.
\end{equation}
On recalling the definition of $\rteps$, and invoking again Poincar\'{e}?s inequality, we obtain
\begin{equation}
  \label{eq:L2est_ii}
  \intotom (u_{\eps}-u_{0}-\eps u_{1}(1-\phi_{\eps}))^{2}\di x\di\tau \le
  \const \eps\,.
\end{equation}
Moreover, taking into account that $\reps=\rteps - u_1\phi_\eps$ and using \eqref{eq:restenergy_vi}, it follows that
\begin{equation}\label{eq:a67}
 \intotom |\nabla\reps|^2\di x\di\tau \le \const\left[{\intotom |\nabla\rteps|^2\di x\di\tau
 + \intotom |\nabla (u_1\phi_\eps)|^2\di x\di\tau }\right]\le \frac{\const}{\eps}\,,
\end{equation}
where we recall the estimate for $\nabla (u_1\phi_\eps)$ done in \eqref{eq:lapl_au1phieps}.
Hence, by \eqref{eq:L2est_ii} and \eqref{eq:a67}, we obtain \eqref{eq:error_na}.
Finally, \eqref{eq:error_nb} can be obtained
making use of \eqref{eq:L2est_ii} and taking into account that
\begin{equation*}
  \intotom (\eps u_{1}(1-\phi_{\eps}))^{2}\di x\di\tau \le
  \gamma \eps^{2}\,.
\end{equation*}
This concludes the proof.
\end{proof}



\end{document}